\newcommand\ga{\alpha}
\newcommand\gb{\beta}
\newcommand\gd{\delta}
\renewcommand\ge{\epsilon}
\renewcommand\th{\theta}
\newcommand\gw{\omega}
\newcommand\gv{\upsilon}
\newcommand\bgw{\boldsymbol{\omega}}
\newcommand\bgv{\boldsymbol{\upsilon}}
\newcommand\gD{\boldsymbol{\gd}}
\newcommand\bbD{\mathbf{D}}
\newcommand\bT{{\bf T}}
\newcommand\ba{{\bf a}}
\newcommand\bq{{\bf q}}
\newcommand\bbR{{\bf r}}
\newcommand\bs{{\bf s}}
\newcommand\bv{{\bf v}}
\newcommand\bw{{\bf w}}
\newcommand\bx{{\bf x}}
\newcommand\by{{\bf y}}
\newcommand\bzero{\boldsymbol{0}}
\newcommand\x{\times}
\renewcommand\bbR{{\mathbb R}}
\renewcommand\bbD{{\mathbb D}}
\newcommand\Lse{\mathfrak{se}}
\newcommand\Lso{\mathfrak{so}}
\newcommand\Lg{\mathfrak{g}}
\newcommand\Lt{\mathfrak{t}}
\newcommand\Ad{\mathrm{Ad}}
\renewcommand\({\left(}
\renewcommand\){\right)}
\newcommand\dsum{\displaystyle\sum}
\newcommand\nbb{\nabla}
\newcommand\diag{\mathrm{diag}\,}
\renewcommand{\v}[1]{\ensuremath{\mathbf{#1}}}
\newtheorem{thm}{Theorem}[section]
\newtheorem{cor}[thm]{Corollary}
\newtheorem{lem}[thm]{Lemma}
\theoremstyle{definition}
\newtheorem{definition}[thm]{Definition}
\begin{document}

\title{Invariants of the $k$-fold adjoint action of the Euclidean group
}

\author{Mohammed~Daher and Peter~Donelan}
\address{School of Mathematics, Statistics \& Operations Research, Victoria~University~of~Wellington, PO Box 600, Wellington 6140, New Zealand}
\email{peter.donelan@vuw.ac.nz}

\maketitle

\begin{abstract}
A non-zero element of the Lie algebra $\mathfrak{se}(3)$ of the special Euclidean spatial isometry group $SE(3)$ is known as a {\em twist} and the corresponding element of the projective Lie algebra is termed a {\em screw}. Either can be used to describe a one-degree-of-freedom joint between rigid components in a mechanical device or robot manipulator.  This leads to  a practical interest in multiple twists or screws, describing the overall instantaneous motion of such a device. In this paper, invariants of multiple twists under the action induced by the adjoint action of the group are determined.  The ring of the polynomial  invariants  for the adjoint action of $SE(3)$ acting on a single twist is well known to be finitely generated by the Klein and Killing forms, while a theorem of Panyushev~\cite{pchev} gives finite generation for the real invariants  of the induced action on two twists. However we are not aware of a corresponding theorem for $k$~twists, where $k\geq3$.  Following Study~\cite{study}, we use the principle of transference to determine fundamental algebraic invariants and their syzygies. We prove that the ring of invariants  for triple twists is rationally finitely generated by $13$ of these invariants. 
\keywords{Euclidean isometry group \and twist \and polynomial invariant \and dual number}
\end{abstract}

\section{Introduction}

\label{s:intro}

Joints in mechanisms and robot manipulators correspond mathematically to subalgebras of the Euclidean Lie algebra and the motions they admit are exponentials of these.  The subalgebras are known in the kinematics literature as screw systems~\cite{hunt}. In the simplest case of a joint with one degree of freedom, these are simply a single screw---the span of any non-zero element of the Lie algebra, known as a twist.  A typical mechanism or manipulator will involve a number of joints so that determining invariants of sets or sequences of twists under the adjoint action provides a means for classification, as well as providing geometric insight into the variety of such devices. The  goal of this paper is to present an approach to obtaining a complete picture of such Euclidean adjoint invariants, with particular emphasis on the case of three twists, via dualisation of classical invariants.

The \textit{special Euclidean group $SE(n)$ of order $n$} is the subgroup of the isometry group of affine $n$-space consisting of the direct isometries, that is, isometries preserving orientation. It is well known that, relative to a choice of orthogonal coordinates identifying affine with Euclidean $n$-space, any such isometry can be nicely identified as a composition of a rotation about the origin---an element of the special orthogonal group $A\in SO(n)$---followed by translation by some vector $\ba\in\bbR^n$. Thus the group is isomorphic to a semi-direct product:
\begin{align}\label{semi}
 SE(n)\cong SO(n)\ltimes \bbR^n.
\end{align}
Correspondingly, the Lie algebra $\Lse(n)$ is isomorphic to a semi-direct sum (so the Lie bracket is not defined component-wise) of the Lie algebra $\Lso(n)$ of skew-symmetric $n\x n$ matrices and the abelian algebra $\Lt(n)$ of infinitesimal translations.   We shall be interested only in the case $n=3$ which has very special properties arising from the identification of the natural action of $SO(3)$ that determines the semi-direct product with its adjoint action. 

A {\em twist} is an element of the Lie algebra~$\Lse(3)$. Twists can be represented in a variety of ways, but the most succinct is using  (generalised) Pl\"ucker coordinates (see, for example,~\cite{selig}).  Making use of the isomorphism between $\Lso(3)$ and $\Lt(3)$ given by:
\begin{equation}
\label{e:skewrep}
\begin{bmatrix}0&-\gw_3&\gw_2\\ \gw_3&0&-\gw_1\\ -\gw_2&\gw_1&0\end{bmatrix}\longleftrightarrow\begin{bmatrix}\gw_1\\ \gw_2\\ \gw_3\end{bmatrix},
\end{equation}
we  denote a twist $\bs$ as a 6--vector consisting of a pair of 3--vectors: $\bs=(\bgw,\bgv)$. Strictly speaking, this should be $(\bgw^t,\bgv^t)^t$, but there should be no ambiguity in the simpler notation. 
Kinematically, a joint in a manipulator with one degree of freedom can be represented by a twist $\bs$ and the motion of one link with respect to its adjoined link is given by $\exp(\th\bs)$ where $\th\in\bbR$ is the joint variable.  These Pl\"ucker coordinates rely on the choice of spatial coordinate frame, but we would expect kinematic properties to be invariant under change of coordinates. 
Mathematically, they should be invariants of the {\em adjoint action} $\Ad$ of the Euclidean group on its Lie algebra.  The most familiar of these invariants is the {\em pitch} of a twist which, in terms of Pl\"ucker coordinates for $\bgw\neq\bzero$, is the ratio:
\begin{equation}
\label{e:pitch}
h=\frac{\bgw.\bgv}{\bgw.\bgw}
\end{equation}
of the Klein form:
\begin{equation}
\label{e:klein}
\bgw.\bgv=\gw_1\gv_1+\gw_2\gv_2+\gw_3\gv_3
\end{equation}
and the Killing form: 
\begin{equation}
\label{e:killing}
\bgw.\bgw=\gw_1^2+\gw_2^2+\gw_3^2.
\end{equation}
Each of these forms is an {\em invariant} polynomial $f(\bgw,\bgv)$ of the adjoint action in the sense that for any $g\in SE(3)$, that is:
\begin{equation}
\label{e:invpoly}
f(\Ad(g)(\bgw,\bgv))=f(\bgw,\bgv).
\end{equation}
We are working here with the adjoint action of the semi-direct product $SO(3)\ltimes\bbR^3$---that is we assume a given coordinate frame in affine 3--space. From now on we shall work under this assumption and therefore identify $SE(3)$ with the product. 

Almost every twist gives rise to an exponential motion having an invariant axis about which it rotates and along which it translates.  The pitch represents the displacement along the axis resulting from one full rotation about it.  Exceptional cases are those for which $\bgw=\bzero$, when the pitch is conventionally set to be $\infty$. On the other hand, if $\bgw\neq\bzero$ and the Klein form vanishes, the pitch is zero and one can identify the twist with its axis---in this case the coordinates correspond with the Pl\"ucker line coordinates of the axis. 

For a serial kinematic chain, we have several joints, and hence are interested in the invariants of a set of twists $\bs_1,\dots,\bs_k$. Since the Euclidean group is algebraic---that is it can be represented as the zero set of polynomials---its polynomial invariants are of particular significance. Other geometrically relevant invariant quantities, such as the pitch, may be expressed rationally or algebraically in terms of them. Our primary goal is to determine fundamental  invariants of the action induced by the adjoint action of $SE(3)$ for sets of three twists and to show that the ring of invariants is rationally finite generated.  In the terminology of Weyl~\cite{weyl}, these are called {\em vector invariants}---that is, the invariants of an action on a `vector' of elements in the space of the action. 

While this paper concentrates on establishing the theory of $k$-fold invariants, their geometric interpretation is, of course, important and is explored further in \cite{daher,dahdon}

A number of authors \cite{gibson,per,ros,tak} have explored invariants of the adjoint and co-adjoint action (the latter of importance in theoretical physics). Selig~\cite{selig} explicitly makes use of the principle of transference to derive invariants that essentially correspond to those we obtain. In~\cite{crook}, the algebraic method of SAGBI bases is employed to find some of these invariants. The work of Takiff~\cite{tak} has been extended by Panyushev~\cite{pchev}, but in a technical algebraic--geometric setting, and he too obtains the dualised form of 2-fold invariants. 

Our guide to find the vector invariants  is the {\em principle of transference}, whose origins are in the work of Clifford and which was subsequently formulated by A~P.~Kotelnikov and E.~Study~\cite{study}.  For more recent descriptions of the principle see, for example, Rico  and Duffy~\cite{duffy}, Chevallier~\cite{chev}, Selig~\cite{selig} and Rooney~\cite{rooney}.  An algebraic version of the principle states that on replacing real coordinates by dual coordinates, valid statements about vectors in $\bbR^3$ become valid statements about twists, written as dual vectors $\bgw+\ge\bgv$.  Here $\ge$ is a quantity such that $\ge^2=0$.  Chevallier~\cite{chev} notes that this should not be regarded as a theorem, as there are exceptions to its application: it is a valuable generic guide. We make use of the principle by starting with invariants of  the rotation group $SO(3)$, acting on $k$ vectors in $\bbR^3$~\cite{weyl}, which we  refer to as $k$-fold invariants.  As was already observed by Study, dualisation of $k$-fold $SO(3)$ invariants leads to dual invariant polynomials, whose real and dual parts are invariants of the Euclidean group.  While the relevant invariants are identified in this manner by Study, he does not explore the question whether they generate all invariants, nor does he provide proofs.

The content of this paper is as follows. In Section~2, the connection between the Euclidean group and the dualised special orthogonal group is set out. Section~3 provides the justification for using dualisation as a means of generating invariants, which does not seem to have been set down previously. The invariants for multiple twists and the associated syzygies are derived in Section~4. Finally, in Section~5, a proof that these invariants rationally generate all invariants in the triple-twist case is given.

\section{The Euclidean group and dualisation}
\label{s:se3}
Let $\bbD$ denote the ring of {\em dual numbers} $a+\ge b$, $a,b\in\bbR$ and $\ge^2=0$ with component-wise addition, and multiplication defined in the obvious way. Note that $\bbD$ is not a field, as there are zero divisors and not every non-zero quantity has a multiplicative inverse. However, it is a 2-dimensional real associative algebra. In the dual number $a+\ge b$, $a$ is referred to as the {\em primal part} and $b$, the {\em dual part}.  Various modules of interest, such as $\Bbb D^3$, can be constructed by taking vectors or matrices of dual numbers; these can also be written as a sum of primal and dual parts. We shall show shortly the relevance of this for the Euclidean group.

The position of a rigid body, for example a component in a robot arm,  with respect to some reference frame is represented by an element of the Euclidean group $SE(3)$.  As noted above, the group is isomorphic to a (semi-direct) product  $SO(3)\ltimes\bbR^3$ of the orientation-preserving rotations $SO(3)$ and  vector translations $\bbR^3$. It is a 6--dimensional Lie group.  
Correspondingly, the Lie algebra $\Lse(3)$ of the Euclidean group is, as a vector space, the direct sum of $\Lso(3)$, the infinitesimal rotations and $\Lt(3)$, infinitesimal translations.  Geometrically, a twist $\bs=(\bgw,\bgv)\in\Lse(3)$ can be interpreted as a vector field on $\bbR^3$ whose integral curves---the motion generated by the twist---are helices of pitch $h=\bgw.\bgv/\bgw.\bgw$ about an axis with direction vector $\bgw$ and moment $\bgw\x\bq=\bgv-h\bgw$ about the origin (where $\bq=(\bgv\x\bgw)/\bgw.\bgw$ is a point on the axis~\cite{selig}). Note that if $h=0$ then the the motion is revolute---the integral curves are circles centred on the axis and lying in planes orthogonal to it. If, on the other hand, $\bgw=\bzero$, then the motion is translational and the integral curves are lines parallel to $\bv$. In this case, the twist is said to have infinite pitch and the corresponding motion is translational. 

The algebra $\Lso(3)$ consists of $3\x3$ skew-symmetric matrices but these in turn can be identified with elements of $\bbR^3$ as in~(\ref{e:skewrep}).  Note that we can also reverse this identification, so that a translation vector $\ba\in\bbR^3$ corresponds to a skew-symmetric matrix $T$, say.  The adjoint action of the Lie group $SE(3)$ on its Lie algebra can then be represented in partitioned matrix form by:
\begin{equation}
\label{adjoint}
(A,\ba).\bs=\begin{bmatrix}A&&\mathrm{O}\\TA&&A\end{bmatrix}\begin{bmatrix}\bgw\\ \bgv\end{bmatrix}
\end{equation}
where $(\bgw,\bgv)$ are the Pl\"ucker coordinates of a twist $\bs\in \Lse(3)$~\cite{selig}. 

The rotation group $SO(3)$ is characterised by the following conditions on a $3\x3$ (real) matrix $A$:
\begin{equation}
\label{e:so3}
AA^t=I,\quad \det A=1.
\end{equation}
Replacing entries in $A$ by dual numbers gives rise to a dual matrix $\hat{A}=A_0+\ge A_1$ where $A_0,A_1$ are real $3\x3$ matrices, the primal and dual parts respectively. The same equations (\ref{e:so3}) determine a group $SO(3,\bbD)$~\cite{mcarthy}. Equating primal and dual parts we obtain:
\begin{equation}
\label{e:so3d}
A_{0}A_{0}^{t}=I,\quad (A_{1}A_{0}^{t})^t=-A_{1}A_{0}^{t}, \quad \det(A_{0})=1,
\end{equation}
so that $A_0\in SO(3)$ and $A_1A_0^t$ is skew symmetric. Identifying this skew-symmetric matrix with a translation vector in $\bbR^3$ gives rise to an isomorphism between $SO(3,\bbD)$ and $SE(3)$ that is at the heart of the principle of transference. Explicitly,
\begin{equation}
\label{dualiso}
\phi:SO(3,\bbD)\to SE(3);\quad \phi(A_0+\ge A_1)=\begin{bmatrix}A_0&\mathrm{O}\\A_1&A_0\end{bmatrix}
\end{equation}
Note that the skew-symmetric matrix in (\ref{adjoint}) $T=A_1A_0^t$.

Dualising works in the Lie algebra, in that  twists in $\Lse(3)$ can be written as dual vectors $\hat{\bgw}=\bgw+\ge\bgv$, where the primal and dual parts $(\bgw,\bgv)$ constitute the Pl\"ucker coordinates.  A change of coordinate frame corresponds to conjugation in the group, giving rise to the Lie bracket in $\Lse(3)$.  Under the isomorphism of $\Lso(3)$ with $\bbR^3$,  the Lie bracket on $\Lso(3)$ corresponds to the standard vector product on $\bbR^3$, $[\bgw_1,\bgw_2]=\bgw_1\x \bgw_2$.
Writing elements of $\Lse(3)$ as dual vectors $\hat{\bgw}_i=\bgw_i+\ge\bgv_i$, $i=1,2$, the Lie bracket is the dual vector product:
\begin{equation}
\label{e:lb}
[\hat{\bgw}_1,\hat{\bgw}_2]=(\bgw_1+\ge\bgv_1)\x(\bgw_2+\ge\bgv_2)=\bgw_1\x\bgw_2+\ge(\bgw_1\x\bgv_2+\bgv_1\x\bgw_2).
\end{equation}
The geometric interpretation of the Lie bracket $[\bs_1,\bs_2]$, in the generic case $\bgw_1\x\bgw_2\neq\bzero$, is as a twist whose axis is the common perpendicular to the axes of $\bs_1,\bs_2$. Its twist $h_{12}$ is given by a function of the pitches $h_i$ of $\bs_i$, $i=1,2$ and the relative placement of the twists~\cite{Samuel}. 

It follows from the isomorphism (\ref{dualiso}) that the adjoint action of $SE(3)$ becomes, in dual form, the standard action of $SO(3,\bbD)$ on $\bbD^3$.  (Note that the adjoint action of $SO(3)$ is equivalent to the standard action under the identification of skew-symmetric matrices and 3-vectors.) This is the essential ingredient in determining invariant polynomials for sets of twists.

\section{Dual mapping and invariants of $SE(3)$}
\label{s:inv}
The principle of transference, as presented by Study in Section~23 of~\cite{study}, is given a modern interpretation by Duffy and Rico~\cite{duffy}, in terms dualisation of the commutative algebra of real differentiable functions. Since we shall mainly be interested in polynomials, it will be sufficient to consider this process on the ring of polynomials in $n$~variables, $\bbR[\gw_1,\dots,\gw_n]=\bbR[\bgw]$.  We regard a polynomial here synonymously with the associated real-valued function. Then the process of dualisation is simply the evaluation of the polynomial in terms of dual variables $\gw_1+\ge\gv_1,\dots,\gw_n+\ge\gv_n$. The resulting function can be identified with a certain type of polynomial in $2n$~variables with dual coefficients.  
Expanding powers and applying $\ge^2=0$ determines the {\bf dual mapping}, $\gD: \bbR[\bgw] \longrightarrow \bbD[\bgw,\bgv] $, defined by:
\begin{align}
\gD(f)(\gw_{1}, \gw_{2},\cdots, \gw_{n})&=\hat{f}(\gw_{1}, \gw_{2},\cdots, \gw_{n};  \gv_{1}, \gv_{2},\cdots,\gv_{n})\notag\\
&=f( \gw_{1}, \gw_{2},\cdots, \gw_{n})+\ge\dsum_{r=1}^{n}  \gv_{r}\displaystyle \frac{\partial f}{\partial  \gw_{r}} (\bgw) \notag\\
&=f(\bgw)+\epsilon \nbb f(\bgw)\bgv.\label{dual}
\end{align}
Note that dualisation is a partial polarisation (see for example~\cite{kraft}).

Given a matrix group $G$ acting linearly on $\bbR^n$, the set of polynomials $f\in\bbR[\bgw]$ invariant under $G$ is denoted $\bbR[\bgw]^G$. A {\bf $k$-fold invariant} of $G$ is a polynomial $f\in\bbR[\bgw_1,\dots,\bgw_k]$ (where each $\bgw_i$ denotes $n$ variables) so that for all $g\in G$,
\begin{equation}
\label{invariant}
f(g.\bgw_1,\cdots,g.\bgw_k)=f(\bgw_1,\cdots,\bgw_k).
\end{equation}
We employ the same terminology for invariants of dual matrix groups acting on the module $\bbD^n$.

\begin{lem}\label{q3}
Let $f\in\bbR[\bgw]^{SO(3)}$. There is a scalar-valued function $\lambda(\bgw)$ such that for $\bgw\in\bbR^3$, the gradient $\nabla f(\bgw)=\lambda(\bgw)\,\bgw^t$.
\end{lem}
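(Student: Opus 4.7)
The plan is to recognize the lemma as the infinitesimal form of the classical fact that an $SO(3)$-invariant function on $\bbR^3$ depends only on the radial coordinate $\bgw\cdot\bgw$, so that its gradient is radial. I would derive this directly from the invariance hypothesis rather than by quoting the First Fundamental Theorem, since the statement only asks that $\lambda$ be a scalar-valued function, not a polynomial.

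First I would fix $X\in\Lso(3)$ and consider the one-parameter subgroup $A(t)=\exp(tX)\subset SO(3)$. Since $f\in\bbR[\bgw]^{SO(3)}$, we have $f(A(t)\bgw)=f(\bgw)$ for every $t$ and every $\bgw\in\bbR^3$. Differentiating at $t=0$ via the chain rule produces the pointwise orthogonality
\[
\nabla f(\bgw)\cdot X\bgw=0,\qquad\text{for every }\bgw\in\bbR^3 \text{ and every }X\in\Lso(3).
\]

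Next I would identify the set $\{X\bgw:X\in\Lso(3)\}$. Under the identification (\ref{e:skewrep}) between skew-symmetric $3\times 3$ matrices and vectors in $\bbR^3$, the action $X\mapsto X\bgw$ becomes $\bx\mapsto\bx\times\bgw$, whose image for $\bgw\neq\bzero$ is precisely $\bgw^\perp$. Combining this with the previous display forces $\nabla f(\bgw)\in(\bgw^\perp)^\perp=\bbR\bgw$, and setting
\[
\lambda(\bgw)=\frac{\nabla f(\bgw)\cdot\bgw}{\bgw\cdot\bgw}
\]
gives a scalar function on $\bbR^3\setminus\{\bzero\}$ satisfying $\nabla f(\bgw)=\lambda(\bgw)\,\bgw^t$.

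Finally, at the origin I would observe that the standard representation of $SO(3)$ on $\bbR^3$ is irreducible and fixes no line, so it carries no non-zero invariant linear form; hence the degree-one part of $f$ vanishes, $\nabla f(\bzero)=\bzero$, and the identity holds trivially for any value assigned to $\lambda(\bzero)$. No step in this argument is a serious obstacle: the whole content is the identification of the tangent space to an $SO(3)$-orbit at $\bgw$ with $\bgw^\perp$ via the cross product, after which the conclusion is automatic.
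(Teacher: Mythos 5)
Your proof is correct, and it reaches the same geometric fact as the paper --- that $\nabla f(\bgw)$ is orthogonal to the $SO(3)$-orbit through $\bgw$ --- but by a genuinely different route. The paper argues globally: since $SO(3)$ acts transitively on spheres, the level sets of $f$ are unions of spheres (plus possibly the origin), and the vector-calculus fact that gradients are normal to level sets then forces $\nabla f(\bgw)$ to be radial. You instead argue infinitesimally: differentiating $f(\exp(tX)\bgw)=f(\bgw)$ at $t=0$ gives $\nabla f(\bgw)\cdot X\bgw=0$ for all $X\in\Lso(3)$, and the identification of $X\bgw$ with $\bx\times\bgw$ shows that $\{X\bgw : X\in\Lso(3)\}=\bgw^{\perp}$ for $\bgw\neq\bzero$, whence $\nabla f(\bgw)\in\bbR\bgw$. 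Your version buys two things: it avoids any fuss about level sets being \emph{unions} of spheres rather than single spheres (the paper's appeal to ``the tangent to any level curve'' is really an appeal to tangency along the orbit, which is exactly what you compute), and it treats the origin explicitly via the absence of nonzero $SO(3)$-invariant linear forms, a case the paper passes over with the parenthetical ``and the origin $\bzero$.'' The paper's version is more elementary and stays within the vector-calculus register of the surrounding text; yours is the Lie-theoretic form of the same statement and generalises more readily to other groups and orbit types. Both are complete proofs.
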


\begin{proof}
Given $A\in SO(3)$, by invariance we have $f(A\bgw)=f(\bgw)$.  Given $\bgw'$ so that $\|\bgw'\|=\|\bgw\|$, there exists $A\in SO(3)$ so that $\bgw'=A\bgw$. Hence the level sets of $f$ are (possibly unions of) spheres and the origin $\bzero$. From vector calculus,  $\nabla f(\bgw)\bT=0$ for the tangent $\bT$ to any level curve of $f$ through $\bw$ and hence $\nabla f(\bgw)=\bzero$, or is normal to the sphere through $\bgw$. Since $\bgw$ is also orthogonal to the sphere, therefore there is a scalar $\lambda(\bgw)$ such that $\nabla f(\bgw)=\lambda(\bgw) \bgw^t$.

\end{proof}

The following theorem asserts that dualising a $k$-fold invariant of $SO(3)$ yields a dual invariant polynomial. 

\begin{thm}\label{dualthm1}
Let $f\in\bbR[\bgw_{1},\cdots,\bgw_{k}]^{SO(3)}$, then $\gD(f)=\hat{f}$ is a dual $k$-fold invariant of $SO(3,\bbD)$.
\end{thm}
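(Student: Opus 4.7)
The plan is to unwind $\hat f(\hat A\hat\bgw_1,\ldots,\hat A\hat\bgw_k)$ into its primal and dual parts via (\ref{dual}) and match each to the corresponding part of $\hat f(\hat\bgw_1,\ldots,\hat\bgw_k)$. Writing $\hat A=A_0+\ge A_1\in SO(3,\bbD)$ and $\hat\bgw_i=\bgw_i+\ge\bgv_i$, the dual action gives $\hat A\hat\bgw_i=A_0\bgw_i+\ge(A_1\bgw_i+A_0\bgv_i)$. Substituting into the formula (\ref{dual}) for $\gD(f)$, the primal part of $\hat f(\hat A\hat\bgw_1,\ldots,\hat A\hat\bgw_k)$ is $f(A_0\bgw_1,\ldots,A_0\bgw_k)$ and the dual part is $\sum_{i=1}^k\nabla_{\bgw_i}f(A_0\bgw_1,\ldots,A_0\bgw_k)\cdot(A_1\bgw_i+A_0\bgv_i)$. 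The primal part immediately equals $f(\bgw_1,\ldots,\bgw_k)$ by the $SO(3)$-invariance of $f$.

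For the dual part the key identity comes from differentiating the invariance $f(A_0\bgw_1,\ldots,A_0\bgw_k)=f(\bgw_1,\ldots,\bgw_k)$ with respect to $\bgw_i$. By the chain rule, $\nabla_{\bgw_i}f(A_0\bgw_1,\ldots)\,A_0=\nabla_{\bgw_i}f(\bgw_1,\ldots)$, and since $A_0^{-1}=A_0^t$ we obtain $\nabla_{\bgw_i}f(A_0\bgw_1,\ldots)=\nabla_{\bgw_i}f(\bgw_1,\ldots)\,A_0^t$. Applied to the $A_0\bgv_i$ contribution, $A_0^tA_0=I$ collapses the sum to $\sum_i\nabla_{\bgw_i}f(\bgw_1,\ldots)\cdot\bgv_i$, which is exactly the dual part of $\hat f(\hat\bgw_1,\ldots,\hat\bgw_k)$.

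The remaining task is to show that the $A_1\bgw_i$ contribution vanishes. After the same substitution it becomes $\sum_i\nabla_{\bgw_i}f(\bgw_1,\ldots)\cdot(A_0^tA_1)\bgw_i$. From (\ref{e:so3d}) the matrix $A_1A_0^t$ is skew-symmetric; a short calculation then shows $S:=A_0^tA_1$ is also skew-symmetric. Hence $S\in\Lso(3)$ generates a one-parameter subgroup $\exp(tS)\subset SO(3)$, and differentiating $f(\exp(tS)\bgw_1,\ldots,\exp(tS)\bgw_k)=f(\bgw_1,\ldots,\bgw_k)$ at $t=0$ yields exactly $\sum_i\nabla_{\bgw_i}f(\bgw_1,\ldots)\cdot S\bgw_i=0$, completing the verification. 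The only genuinely delicate step is spotting the skew-symmetry of $A_0^tA_1$ and converting the cancellation into an instance of infinitesimal $SO(3)$-invariance; for $k=1$ Lemma~\ref{q3} short-circuits this via $\nabla f(\bgw)\cdot S\bgw=\gl(\bgw)\bgw^tS\bgw=0$, but for general $k$ no single partial gradient is proportional to its $\bgw_i$, so the Lie-algebra argument is essential.
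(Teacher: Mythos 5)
Your proposal is correct and follows the same overall computation as the paper: expand $\hat f(\hat A\hat{\bgw}_1,\dots,\hat A\hat{\bgw}_k)$ via (\ref{dual}), dispose of the primal part by invariance, use the chain rule together with $A_0^tA_0=I$ to recover the $\sum_i\nabla_{\bgw_i}f\cdot\bgv_i$ term, and reduce everything to showing that $\sum_i\nabla_{\bgw_i}f(\bgw_1,\dots,\bgw_k)\cdot(A_0^tA_1)\bgw_i=0$. Where you genuinely diverge is in how that last cancellation is justified. The paper cites Lemma~\ref{q3} to obtain (\ref{zeroterm}); but that lemma is stated only for $1$-fold invariants, and for $k\geq2$ the individual partial gradients $\nabla_{\bgw_i}f$ need not be proportional to $\bgw_i^t$ (already $f=\bgw_1\cdot\bgw_2$ has $\nabla_{\bgw_1}f=\bgw_2^t$), so the term-by-term vanishing the paper's wording suggests does not hold -- only the full sum vanishes. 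Your argument -- observe that $S=A_0^tA_1$ is skew-symmetric, hence lies in $\Lso(3)$, and differentiate $f(\exp(tS)\bgw_1,\dots,\exp(tS)\bgw_k)=f(\bgw_1,\dots,\bgw_k)$ at $t=0$ -- proves exactly the identity (\ref{zeroterm}) that is needed, for every $k$, and you correctly identify that this infinitesimal-invariance step is essential rather than optional once $k\geq2$. In short, your route buys a justification of the key cancellation that is valid in the generality the theorem claims, whereas the paper's appeal to Lemma~\ref{q3} literally covers only the case $k=1$ and should be read as shorthand for the Lie-algebra argument you supply.
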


\begin{proof}
Let $\hat{A}=A_0+\ge A_1\in SO(3,\bbD)$, so that $A_0^tA_0=I$. Furthermore $A^t_{0}A_{1}$ is skew symmetric, therefore  $\bgw^t(A^t_{0}A_{1})\bgw=0$ so that from Lemma~\ref{q3},
\begin{equation}
\label{zeroterm}
\nbb f(\bgw_{1},\cdots,\bgw_{k})(A^t_{0}A_{1}\bgw_{1},\cdots, A^t_{0}A_{1}\bgw_{k})^t=0.
\end{equation}
Since $f$ is invariant,
\begin{equation}
\label{finv}
f(A_{0}\bgw_{1},\cdots,A_{0}\bgw_{k})=f(\bgw_{1},\cdots,\bgw_{k}).
\end{equation}
Suppose $\hat{\v u}_{i}=\bgw_{i}+\ge\bgv_{i}\in\bbD^3$ for  $i=1,\dots,k$. By the chain rule,  
\begin{equation}
\label{chrule}
\nbb f(A_{0}\bgw_{1},\cdots,A_{0}\bgw_{k})=\nbb f(\bgw_{1},\cdots,\bgw_{k})\diag(\underbrace{A^t_{0},\cdots,A^t_{0}}_\text{$k$-times})
\end{equation}
Hence,
\begin{align*}
\hat{f}&\left(\hat {A}\hat{\v u}_{1},\cdots,\hat {A}\hat{\v u}_{k}\right)\\
&=f\left(A_{0}\bgw_1,\cdots,A_{0}\bgw_k;A_{1}\bgw_1+ A_{0}\bgv_1,\cdots,A_{1}\bgw_k+ A_{0}\bgv_k\right)\\
&=f(A_{0}\bgw_{1},\cdots,A_{0}\bgw_{k})+\ge\,\nbb f(A_{0}\bgw_{1},\cdots,A_{0}\bgw_{k})\begin{bmatrix}A_{1}\bgw_{1}+A_{0}\bgv_{1}\\\vdots\\ A_{1}\bgw_{k}+A_{0}\bgv_{k}\end{bmatrix}\;\text{by (\ref{dual})}\\
&=f(\bgw_{1},\cdots,\bgw_{k})\\
&\qquad+\ge\nbb f(\bgw_{1},\cdots,\bgw_{k}) \diag(\underbrace{A^t_{0},\cdots,A^t_{0}}_\text{$k$-times})
\(\begin{bmatrix}A_{0}\bgv_{1}\\\vdots\\ A_{0}\bgv_{k}\end{bmatrix}
+\begin{bmatrix}A_{1}\bgw_{1}\\\vdots\\ A_{1}\bgw_{k}\end{bmatrix}\)\;\text{by (\ref{finv},\ref{chrule})} \\
&=f(\bgw_{1},\cdots,\bgw_{k})+\ge\nbb f(\bgw_{1},\cdots,\bgw_{k}) \(\begin{bmatrix}\bgv_{1}\\\vdots\\ \bgv_{k}\end{bmatrix}+\begin{bmatrix}A^t_{0}A_{1}\bgw_{1}\\\vdots\\ A^t_{0}A_{1}\bgw_{k}\end{bmatrix}\)\\
&=f(\bgw_{1},\cdots,\bgw_{k})+\ge\nbb f(\bgw_{1},\cdots,\bgw_{k}) (\bgv_{1},\cdots, \bgv_{k})^t\quad\text{by (\ref{zeroterm})}\\
 &=f(\bgw_{1},\cdots,\bgw_{k};\bgv_{1},\cdots,\bgv_{k})\\
 &=\hat{f}\left(\hat{\v u}_{1},\cdots,\hat{\v u}_{k}\right).
 \end{align*}
 Therefore, $\hat{f}$ is dual $k$-fold invariant of $SO(3,\bbD)$.
 \end{proof}
 
 This leads us to a method for determining real $k$-fold invariants of the adjoint action of the Euclidean group.
 
\begin{thm}\label{dualthm2}
If $f$ is $k$-fold invariant of the adjoint action of $SO(3)$, then the primal and dual parts of $\gD(f)=\hat{f}$ are (real) $k$-fold invariants of the adjoint action of $SE(3)$. Furthermore, the primal and dual parts of the dualisation of any syzygy among $SO(3)$ $k$-fold invariants are syzygies for $SE(3)$ invariants.
\end{thm}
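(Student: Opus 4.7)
The plan is to derive both assertions as immediate consequences of Theorem~\ref{dualthm1}: the first via the isomorphism $\phi:SO(3,\bbD)\to SE(3)$ of~(\ref{dualiso}), and the second via the ring-homomorphism property of $\gD$. Given $g=(A,\ba)\in SE(3)$ with associated skew-symmetric matrix $T$ from~(\ref{e:skewrep}), set $\hat A=A+\ge TA\in SO(3,\bbD)$, so that $\phi(\hat A)=g$. For a twist $\bs_i=(\bgw_i,\bgv_i)$ represented as the dual vector $\hat{\v u}_i=\bgw_i+\ge\bgv_i\in\bbD^3$, one computes $\hat A\hat{\v u}_i=A\bgw_i+\ge(TA\bgw_i+A\bgv_i)$, whose two real components match the two 3-vector blocks of $\Ad(g)\bs_i$ read off from~(\ref{adjoint}). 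Thus under $\phi$ the $SO(3,\bbD)$-action on $\bbD^3$ pulls back exactly to the adjoint action of $SE(3)$ on twists in Pl\"ucker coordinates.

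For the first assertion, write $\hat f=f_0+\ge f_1$ with $f_0=f(\bgw_1,\dots,\bgw_k)$ and $f_1=\sum_{i=1}^k(\nbb_{\bgw_i} f)\cdot\bgv_i$. Theorem~\ref{dualthm1} gives the dual-number identity $\hat f(\hat A\hat{\v u}_1,\dots,\hat A\hat{\v u}_k)=\hat f(\hat{\v u}_1,\dots,\hat{\v u}_k)$. Since both sides are elements of $\bbD$ whose primal and dual components are real polynomials in the $\bgw_i$ and $\bgv_i$, equating these components separately yields two independent real polynomial identities. The primal identity reduces to $SO(3)$-invariance of $f_0$, and since $f_0$ depends only on the $\bgw_i$---on which the upper block of~(\ref{adjoint}) is just $A$---this is the same as $SE(3)$-invariance of $f_0$. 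Translated through $\phi$, the dual identity is precisely $SE(3)$-invariance of $f_1$ as a polynomial in all the Pl\"ucker coordinates. Hence both the primal and dual parts of $\hat f$ are real $k$-fold $SE(3)$-invariants.

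For the syzygy claim, first observe that $\gD$ is a ring homomorphism: additivity is immediate, while the Leibniz rule together with $\ge^2=0$ yields $\gD(fg)=\gD(f)\gD(g)$. Consequently any syzygy $P(f_1,\dots,f_m)=0$ among $SO(3)$ invariants dualises to $P(\hat f_1,\dots,\hat f_m)=0$. Writing $\hat f_i=f_i+\ge g_i$ and expanding modulo $\ge^2$ produces
\begin{equation*}
0=P(f_1,\dots,f_m)+\ge\sum_{i=1}^m\frac{\partial P}{\partial x_i}(f_1,\dots,f_m)\,g_i,
\end{equation*}
whose primal and dual parts are polynomial identities---the first among the $f_i$ alone, the second a linear relation in the $g_i$ with coefficients polynomial in the $f_i$. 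By the first part of the theorem, all the $f_i,g_i$ are $SE(3)$-invariants, so both identities are genuine syzygies for $SE(3)$ invariants. No substantive obstacle arises; the whole argument is essentially bookkeeping, with the one subtle point being the verification that the $SO(3,\bbD)$-action intertwines with the $SE(3)$-adjoint action under $\phi$.
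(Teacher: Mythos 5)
Your proposal is correct and follows essentially the same route as the paper: the first claim is obtained from Theorem~\ref{dualthm1} via the isomorphism $\phi$ identifying the $SO(3,\bbD)$-action on $\bbD^3$ with the adjoint action of $SE(3)$, and the syzygy claim from the fact that $\gD$ is an algebra homomorphism. You have merely written out explicitly the verifications (that $\hat A=A+\ge TA$ realises $\Ad(g)$, and the expansion of $P(\hat f_1,\dots,\hat f_m)=0$ into primal and dual parts) that the paper leaves implicit.
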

 
\begin{proof}
The first part follows immediately from Theorem~\ref{dualthm1} and the identification of the adjoint action of $SE(3)$ on $\Lse(3)$ with the action of $SO(3,\bbD)$ on $\bbD^3$. That syzygies arise from dualisation is a consequence of the observation of Rico Mart\'{i}nez and Duffy~\cite{duffy} that the dualisation $\gD$ is an algebra homomorphism. 
\end{proof}

\section{Invariants and dualisation}
The standard action of the rotation group $SO(3)$ on $\bbR^3$ is the same as its adjoint action on $\Lso(3)$ under the identification of 3-vectors with skew-symmetric matrices~(\ref{e:skewrep}).  Weyl~\cite{weyl} gives a complete account of the vector  invariants for the standard action of the orthogonal groups of all orders, which for $n=3$ we may therefore treat as vector invariants for the adjoint action.  There are two basic types: given $\bgw_1,\dots,\bgw_m\in\bbR^3\cong\Lso(3)$, every $m$-fold polynomial invariant is generated by (i.e. is a polynomial function of) the quadratic and cubic  invariants 
\begin{align}
I_{ij}&=\bgw_i\cdot\bgw_j,  \quad 1\leq i,j\leq m\notag\\
I_{ijk}&=[\bgw_i\;\bgw_j\;\bgw_k],\quad 1\leq i<j<k\leq m\label{e:finv2}
\end{align}
where the {\em bracket} in $I_{ijk}$ denotes the determinant of the matrix whose columns are the three vectors. Note that these invariants are not algebraically independent but are linked by several types of {\em syzygies}, that is, polynomial relations. We shall only be concerned with $m\leq3$ and in these cases the only syzygy occurs when $m=3$. There are six quadratic invariants and one of cubic type connected by the single syzygy
\begin{equation}
\label{e:syz}
 I_{123}^2=\det(I_{ij}).
 \end{equation}
  
For the adjoint action itself, $SO(3)$ has the single generating invariant $I_{11}=\bgw\cdotp\bgw$ which dualises to give:
\begin{equation}
\label{e:dualinv}
(\bgw+\ge\bgv)\cdotp(\bgw+\ge\bgv)=\bgw\cdotp\bgw+2\,\ge\,\bgw\cdotp\bgv.
\end{equation}
By Theorem~\ref{dualthm2}, the primal and dual parts are invariants $I_{11}$ and $\tilde{I}_{11}=2\bgw.\bv$, respectively.  Up to a constant multiple, these are the familiar Killing and Klein forms whose ratio is the pitch of the twist $S=(\bgw,\bgv)$. They are known to generate the ring of polynomial invariants. 

For $m=2$, there are 6~quadratic invariants arising from dualisation of the $SO(3)$ invariants (see~\cite{crook} for an alternative derivation of these invariants).  In the case $m=3$, there are 14~invariants arising from the primal and dual parts of the dualisations of the 7~generating $3$-fold invariants for $SO(3)$:
\begin{equation}
\label{e:3inv}
\begin{array}{lll}
I_{ij}=\bgw_{i}\cdotp\bgw_{j},&\tilde{I}_{ij}=\bgw_{i}\cdotp\bgv_{j}+\bgv_{i}\cdotp\bgw_{j},\quad&1\leq i\leq j\leq3\\
I_{123}=[\bgw_{1}\;\bgw_{2}\;\bgw_{3}],\quad&
\multicolumn{2}{l}{\tilde{I}_{123}=\dsum_{\sigma\in C_{3}}[\bgw_{\sigma(1)}\;\bgw_{\sigma(2)}\;\bgv_{\sigma(3)}].}
\end{array}
\end{equation}
where $C_{3}$ denotes the cyclic group of order $3$. Note that $\tilde{I}_{ii}=2\bgw_{i}\cdotp\bgv_{i}$ for $i=1,2,3$. 

The first $12$ quadratic $3$-fold invariants  of the adjoint action of $SE(3)$ in (\ref{e:3inv})  are algebraically independent, since their Jacobian matrix has rank~$12$~\cite{daher}.  The syzygy~(\ref{e:syz}) dualises to give, on the left-hand side:
\begin{align*}
[\bgw_{1}+\epsilon\bgv_{1}&\quad\bgw_{2}+\epsilon\bgv_{2}\quad\bgw_{3}+\epsilon\bgv_{3}]^2\\  
&=\(|\bgw_{1}\;\bgw_{2}\;\bgw_{3}|+|\epsilon\bgv_{1}\;\bgw_{2}\;\bgw_{3}|+|\bgw_{1}\;\epsilon\bgv_{2}\;\bgw_{3}|+|\bgw_{1}\;\bgw_{2}\;\epsilon\bgv_{3}|\)^2\\
&=\(|\bgw_{1}\;\bgw_{2}\;\bgw_{3}|+\epsilon(|\bgv_{1}\;\bgw_{2}\;\bgw_{3}|+|\bgw_{1}\;\bgv_{2}\;\bgw_{3}|+|\bgw_{1}\;\bgw_{2}\;\bgv_{3}|)\)^2\\
&=\(|\bgw_{1}\;\bgw_{2}\;\bgw_{3}|\)^2+2\,\epsilon\,|\bgw_{1}\;\bgw_{2}\;\bgw_{3}|\(|\bgv_{1}\;\bgw_{2}\;\bgw_{3}|+|\bgw_{1}\;\bgv_{2}\bgw_{3}|+|\bgw_{1}\;\bgw_{2}\;\bgv_{3}|\)\\
&=I_{123}^2+2\,\epsilon\, I_{123}\tilde{I}_{123},
 \end{align*}
 while the right-hand side gives:
 \begin{align*}
 &\left|\begin{matrix} I_{11}+\epsilon\tilde{I}_{11}\;& I_{12}+\epsilon\tilde{I}_{12}\;&I_{13}+\epsilon\tilde{I}_{13}\\
  I_{12}+\epsilon\tilde{I}_{12}&I_{22}+\epsilon\tilde{I}_{22}&I_{23}+\epsilon\tilde{I}_{23}\\
  I_{13}+\epsilon\tilde{I}_{13}&I_{23}+\epsilon\tilde{I}_{23}&I_{33}+\epsilon\tilde{I}_{33}\\
              \end{matrix}\right|\\
               &\qquad=\left|\begin{matrix} I_{11}& I_{12}&I_{13}\\
  I_{12}&I_{22}&I_{23}\\
  I_{13}&I_{23}&I_{33}\\
              \end{matrix}\right|+
              \epsilon\(\left|\begin{matrix} \tilde{I}_{11}& I_{12}&I_{13}\\
 \tilde{I}_{12}&I_{22}&I_{23}\\
  \tilde{I}_{13}&I_{23}&I_{33}\\
              \end{matrix}\right|+\left|\begin{matrix} I_{11}& \tilde{I}_{12}&I_{13}\\
  I_{12}&\tilde{I}_{22}&I_{23}\\
  I_{13}&\tilde{I}_{23}&I_{33}\\
              \end{matrix}\right|+\left|\begin{matrix} I_{11}& I_{12}&\tilde{I}_{13}\\
  I_{12}&I_{22}&\tilde{I}_{23}\\
  I_{13}&I_{23}&\tilde{I}_{33}\\
              \end{matrix}\right|\)\\
&\qquad=(I_{11}I_{22}I_{33}-I_{11}I_{23}^2+2I_{12}I_{13}I_{23}-I_{12}^2I_{33}-I_{13}^2I_{22})\\
&\qquad\qquad+\epsilon\,(\tilde{I}_{11} I_{22} I_{33}+ I_{11}\tilde{I}_{22} I_{33} +I_{11} I_{22}\tilde{I}_{33}\\
&\qquad\qquad\qquad-\tilde{I}_{11}I_{23}^2- 2I_{11} I_{23}\tilde{I}_{23}+ 2\tilde{I}_{12}I_{13} I_{23}+ 2I_{12}\tilde{I}_{13}I_{23}+ 2I_{12} I_{13}\tilde{I}_{23}\\
&\qquad\qquad\qquad- 2I_{12}\tilde{I}_{12} I_{33}- I_{12}^2\tilde{I}_{33}-2I_{13}\tilde{I}_{13} I_{22}- I_{13}^2\tilde{I}_{22}).
 \end{align*}
 The terms of the dual part are ordered to emphasise the differential nature of dualisation.
 
 Equating primal and dual parts of the two expressions we obtain the pair of syzygies: 
 \begin{align}
I_{123}^2&= I_{11}I_{22}I_{33}-I_{11}I_{23}^2+2I_{12}I_{13}I_{23}-I_{12}^2I_{33}-I_{13}^2I_{22}\label{syz1}\\
2I_{123}\tilde{I}_{123}&=\tilde{I}_{11} I_{22} I_{33}+ I_{11}\tilde{I}_{22} I_{33} +I_{11} I_{22}\tilde{I}_{33}\notag\\
&\qquad-\tilde{I}_{11}I_{23}^2- 2I_{11} I_{23}\tilde{I}_{23}+2 \tilde{I}_{12}I_{13} I_{23}+ 2I_{12}\tilde{I}_{13}I_{23}+ 2I_{12} I_{13}\tilde{I}_{23}\notag\\
&\qquad-2 I_{12}\tilde{I}_{12} I_{33}- I_{12}^2\tilde{I}_{33}-2I_{13}\tilde{I}_{13} I_{22}- I_{13}^2\tilde{I}_{22}.\label{syz2}
  \end{align}
 Bracket versions of these syzygies appear in the work of Study~\cite{study}, Section~23.

 \section[Rationally Invariant Function]{Finite rational generation of $3$-fold invariants}

Having obtained finite lists of $3$-fold invariants and syzygies for $SE(3)$, one would like to establish fundamental theorems asserting that these generate the corresponding rings. The fact that $SE(3)$ fails to be reductive means that such theorems could not follow from the classical theory.  Panyushev~\cite{pchev} uses a theorem of Igusa to establish a general result for so-called Takiff Lie groups $G\ltimes\Lg$ (of which $SE(3)$ is an example). This asserts that, in the absence of syzygies, a fundamental set of $G$--adjoint invariants  and their duals generate the $G\ltimes\Lg$ adjoint invariants. This suffices for the $k$-fold invariants of $SE(3)$, $k=2$, where there are no syzygies, but not in the case $k=3$.    In this section we follow Weyl's method~\cite{weyl} to show  that  all rational $3$-fold invariants of the adjoint action of $SE(3)$  can at least be expressed as rational functions of the $14$~invariants in (\ref{e:3inv}).  Indeed, $13$ suffice, since (\ref{syz2}) enables us to express $\tilde{I}_{123}$ rationally in terms of the remaining invariants.
 
We modify Weyl's argument for $SO(n)$ to the semi-direct product  $SE(3)= SO(3)\ltimes\bbR^3$. In particular, we introduce even and odd invariants of the adjoint action of $E(3)= O(3)\ltimes\bbR^3$ and establish that these are all invariants of the adjoint action of $SE(3)$. 
 
 \subsection{Even and odd invariants}
Let $O(3)=SO(3)\cup O^{-}(3)$ so that for $R\in  O^-(3)$, $\det R=-1$. We also denote by $E^-(3)$ the Cartesian product $O^-(3)\x\bbR^3$ and recall the identification (\ref{e:skewrep}) of $\Lso(3)$ and $\bbR^3$ which is used throughout the following. 

 \begin{definition} A polynomial $f\in\bbR[\bgw_1,\bgv_1,\cdots,\bgw_k,\bgv_k]$ under the $k$-fold  adjoint action of the isometry group $E(3)$ is called:
 \begin{enumerate}
   \item an \textbf{even invariant} if  for all $(R,T)\in E(3)$:
 \[
 f(R\bgw_1,\cdots,TR\bgw_k+R\bgv_k,\cdots,R\bgw_k,TR\bgw_k+R\bgv_k)=f(\bgw_1,\bgv_1,\cdots,\bgw_k,\bgv_k);
 \]
     \item an \textbf{odd invariant} if  for all $(R,T)\in E(3)$:
 \[
   f(R\bgw_1,\cdots,TR\bgw_k+R\bgv_k,\cdots,R\bgw_k,TR\bgw_k+R\bgv_k)=\det(R)\cdot f(\bgw_1,\bgv_1,\cdots,\bgw_k,\bgv_k),
   \]
 \end{enumerate}
 \end{definition}
 
Denote by:
\begin{align*}
&A_{_{E}}^{^{(k)}}\quad\text{the subalgebra of all even $k$-fold invariants of the adjoint action of $E(3)$,}\\
&A_{_{O}}^{^{(k)}}\quad\text{the subalgebra of all odd  $k$-fold invariants of the adjoint action of $E(3)$,}\\
&B^{^{(k)}}\quad\text{the subalgebra of all $k$-fold invariants of the adjoint action of $SE(3)$.}
\end{align*}
 
 \begin{thm}\label{oo}
  For all $k$, $A_{_E}^{^{(k)}}+A_{_O}^{^{(k)}}=B^{^{(k)}}$.
 \end{thm}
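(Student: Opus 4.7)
The plan is to prove the equality by establishing both inclusions. The inclusion $A_E^{(k)}+A_O^{(k)}\subseteq B^{(k)}$ is immediate, since every even invariant is in particular invariant under the subgroup $SE(3)$, and for any odd invariant the sign factor $\det(R)=+1$ whenever $R\in SO(3)$, so odd invariants are also $SE(3)$-invariant. The content of the theorem is therefore the reverse inclusion $B^{(k)}\subseteq A_E^{(k)}+A_O^{(k)}$, which I would establish by an averaging argument over the two-element quotient $E(3)/SE(3)$.

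Following Weyl's approach, I would fix a convenient involution $r\in E^-(3)$; the natural choice in dimension three is $r=(-I,\bzero)$, for which $\det(-I)=-1$ and $r^2=e$. Under the adjoint action (\ref{adjoint}) the translation matrix $T$ associated with $r$ is zero, so $r$ acts on a twist by simple negation, and on polynomials by
\begin{equation*}
(r\cdot f)(\bgw_1,\bgv_1,\dots,\bgw_k,\bgv_k)=f(-\bgw_1,-\bgv_1,\dots,-\bgw_k,-\bgv_k).
\end{equation*}
Given $f\in B^{(k)}$, form the symmetrised and antisymmetrised pieces
\begin{equation*}
 f^+=\tfrac12(f+r\cdot f),\qquad f^-=\tfrac12(f-r\cdot f),
\end{equation*}
so that $f=f^++f^-$.

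The verification then splits into three small checks. First, because $SE(3)$ is normal in $E(3)$ (as the kernel of $\det$), conjugation by $r$ sends $SE(3)$ to itself, so $r\cdot f$ is again $SE(3)$-invariant and hence both $f^+$ and $f^-$ lie in $B^{(k)}$. Second, using $r^2=e$ one checks directly that $r\cdot f^+=f^+$ and $r\cdot f^-=-f^-$. Third, the coset decomposition $E(3)=SE(3)\sqcup r\cdot SE(3)$ together with $\det(r)=-1$ upgrades these two identities to full $E(3)$-equivariance with the required sign, showing $f^+\in A_E^{(k)}$ and $f^-\in A_O^{(k)}$. There is no real obstacle; the argument is a standard $\bbZ/2\bbZ$ averaging, and the only point needing care is the explicit choice of involution adapted to the adjoint action, for which $r=(-I,\bzero)$ is well-suited precisely because $A=-I$ forces $T=0$ and yields a clean sign change on all Pl\"ucker coordinates simultaneously.
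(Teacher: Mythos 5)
Your proof is correct and follows essentially the same route as the paper: both perform the standard $\bbZ/2\bbZ$ averaging over $E(3)/SE(3)$ using the involution $(-I_3,\bzero)$, with your normality/coset argument playing the role of the paper's preliminary check that $f'_{(R,T)}$ is independent of the choice of $(R,T)\in E^-(3)$.
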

 
 \begin{proof}
  Let us fix $k$ and suppress the superscript $(k)$. Since $SO(3)$ is a subgroup of $O(3)$ and for $R\in SO(3)$, $\det R=1$, we have $A_{_E}, A_{_O}\subseteq B$, so that $A_{_E}+A_{_O}\subseteq B$.
  
  To prove the reverse inclusion, let us assume $f\in B$, so $f$ is invariant under the $k$-fold adjoint action of $SE(3)$. Given  $(R, T)\in E^-(3)$, define 
 \begin{align}
  \label{fdash}
  f'_{(R,T)}(\bgw_1,\cdots,\bgv_k)=f(R\bgw_1,\cdots,TR\bgw_k+R\bgv_k).
\end{align} 
 We shall prove that for any $R_1,R_2\in O^-(3)$, $f'_{(R_1,T_1)}=f'_{(R_2,T_2)}$.  As a result, the products $R_1R_2, R_2^2 \in SO(3)$, and therefore $(R_1R_2,T_1),(R_2^2,T_2)\in SE(3)$. Given that $f$ is $SE(3)$--invariant:
  \begin{align*}
   f'_{(R_1,T_1)}(\bgw_1,\cdots\!,\bgv_k)&=f(R_1\bgw_1,\cdots,T_1R_1\bgw_k+R_1\bgv_k)\\
   &=f(R_1(R_2R^{-1}_2\bgw_1),\cdots,T_1R_1(R_2R^{-1}_2\bgw_k)+R_1(R_2R^{-1}_2\bgv_k))\\
   &=f(R_1R_2(R^{-1}_2\bgw_1),\cdots,T_1R_1R_2(R^{-1}_2\bgw_k)+R_1R_2(R^{-1}_2\bgv_k))\\
   &=f(R^{-1}_2\bgw_1,\cdots,R^{-1}_2\bgv_k)\\
   &=f(R^2_2(R^{-1}_2\bgw_1),\cdots,T_2R^2_2(R^{-1}_2\bgw_k)+R^2_2(R^{-1}_2\bgv_k))\\
   &=f(R_2\bgw_1,\cdots,T_2R_2\bgw_k+R_2\bgv_k)\\
   &=f'_{(R_2,T_2)}(\bgw_1,\cdots,\bgv_k).
  \end{align*}
 So for any $f\in B$, let $f'$ to be the {\em unique} element of the set 
 \[
 \{f'_{(R,T)}\,|\,(R,T)\in E^-(3)\}.
 \]
 Define
 \begin{align}
 \label{odev} 
 f_{_{E}}=\frac{1}{2}(f+f'),\quad \text{and}\quad f_{_O}=\frac{1}{2}(f-f').
 \end{align}
 Clearly $f=f_{_E}+f_{_O}$, and it remains to show that $f_{_E}\in A_{_E}$ and $f_{_O}\in A_{_O}$.
 To show $f_{_E}\in A_{_E}$, let  $(R,T)\in E(3)$, then  the  two  cases, $R\in SO(3)$ and $R\in O^-(3)$, will be considered separately. 
 
 Suppose $R\in SO(3)$, then clearly $-R\in O^-(3)$.  For any $(R^*, T^*)\in E^-(3)$ we have, using the invariance of $f$ and the uniqueness of $f'$:
 \begin{align*}
  f_{_E}(R\bgw_1,&\cdots,TR\bgw_k+R\bgv_k)\\
  &=\frac{1}{2}[f(R\bgw_1,\cdots,TR\bgw_k+R\bgv_k)+f'(R\bgw_1,\cdots,TR\bgw_k+R\bgv_k)]\\
  &=\frac{1}{2}[f(\bgw_1,\cdots,\bgv_k)+f(R^*(R\bgw_1),\cdots,T^*R^*(R\bgw_k)+R^*(TR\bgw_k+R\bgv_k))],
  \intertext{and choosing, in particular, $R^*=-I_3$ and $T^*=O_{_{3\times3}}$}
  &=\frac{1}{2}[f(\bgw_1,\bgv_1)+f(-R\bgw_1,T(-R)\bgw_1+(-R)\bgv_1)]\\
  &= \frac{1}{2}[f(\bgw_1,\cdots,\bgv_k)+f'(\bgw_1,\cdots,\bgv_k)]\\
  &= f_{_E}(\bgw_1,\cdots,\bgv_k).
 \end{align*}
 On the other hand, for $R\in O^-(3)$, using a similar argument but with $-R\in SO(3)$, we have:
 \begin{align*}
  f_{_E}(R\bgw_1,&\cdots,TR\bgw_k+R\bgv_k)\\
  &=\frac{1}{2}[f(R\bgw_1,\cdots,TR\bgw_k+R\bgv_k)+f'(R\bgw_1,\cdots,TR\bgw_k+R\bgv_k)]\\
  &=\frac{1}{2}[f'(\bgw_1,\cdots,\bgv_k)+f(-R\bgw_1,\cdots,T(-R)\bgw_k+(-R)\bgv_k)]\\
  &= \frac{1}{2}[f'(\bgw_1,\cdots,\bgv_k)+f(\bgw_1,\cdots,\bgv_k)]\\
  &= f_{_E}(\bgw_1,\cdots,\bgv_k).
  \end{align*}
  Therefore $f_{_E}\in A_{_E}$. The argument that $f_{_O}\in A_{_O}$ is essentially the same and the theorem follows. 
 \end{proof}
 
 \subsection{A normal form for three twists}
 In order to express $3$--fold invariants of the adjoint action of $SE(3)$ in a standard form,  we make a change of coordinates in the space of twists. We transform three given non-zero twists $\bs_i=(\bgw_i,\bgv_i)$ to a normal form $S'_i=(\bgw'_i,\bgv'_i)$, for $i=1,2,3$ using the Euclidean transformation: 
 \begin{align*}
  \bgw_i\longmapsto \bgw_i'=R\bgw_i,  \qquad\qquad\bgv_i\longmapsto \bgv_i'=TR\bgw_i+R\bgv_i.
 \end{align*}

First, choose a rotation matrix  $R\in SO(3)$ so that $\bgw_1$  along the $x_1$--axis,   $\bgw_2$ lies in the $x_1x_2$--plane, and $\bgw_3$ transforms to an arbitrary vector in the space, that is:
 \begin{align}\label{ddd1}
 R\bgw_1=\begin{bmatrix}\alpha_1\\0\\0\end{bmatrix},\qquad R\bgw_2=\begin{bmatrix}\alpha_2\\\alpha_3\\0\end{bmatrix},\qquad R\bgw_3=\begin{bmatrix}\alpha_4\\ \alpha_5\\ \alpha_6\end{bmatrix},
 \end{align}
 for some $\alpha_i$, $i=1,\cdots,6$.  Note that this can be done even if $\bgw_i=\b0$ for any $i=1,2,3$. However, assume for the moment that $\bgw_1,\bgw_2$ are linearly independent so that $\ga_1,\ga_3\neq0$. 
 
Now, given $T$ skew-symmetric, of the form:
\[
T=\begin{bmatrix}
0&-t_3&t_2\\t_3&0&-t_1\\-t_2&t_1&0
\end{bmatrix}
\]
we obtain: 
\begin{align}
 \label{ddd2}
 \bgv_1' =(v_{11},\; -t_3\alpha_1+v_{12},\; t_2\alpha_1+v_{13})^t,
 \end{align}
 so that, with $\alpha_1\neq0$,  setting  $t_2=-v_{13}/\alpha_1$ and $t_3=-v_{12}/\alpha_1$ gives $v_{12}'=v_{13}'=0$. Likewise,  we have: 
 \begin{align}
\bgv_2'=(t_3\alpha_3+v_{21},\; t_3\alpha_2+v_{22},\; -t_2\alpha_2+t_1\alpha_3+v_{23})^t.
 \end{align}
Then, with $\ga_3\neq0$,  $t_1=(t_2\alpha_2-v_{23})/\alpha_3$ gives $v_{23}'=0$. 

If, in fact $\bgw_1=\bzero$ then $\bgv_1\neq\bzero$ and we choose $R$ so that
\[
R\bgv_i=\begin{bmatrix}\beta_1\\0\\0\end{bmatrix}.
\]
Likewise, if $\bgw_1,\bgw_2$ are linearly dependent, then $R$ can be chosen so that $\bv_2$ lies on the $x_1x_2$--plane in 3--space. In all cases, we therefore obtain a change of coordinates so that the twists have the form  $\bs'_i=(\bgw'_i,\bgv'_i)$ $i=1,2,3$,  where:
 \begin{align}
 \label{basis1}
  \bs'_1=\begin{bmatrix}
        \alpha_1\\
        0\\
        0\\
        \beta_1\\
        0\\
        0\\
       \end{bmatrix},\qquad\qquad \bs'_2=\begin{bmatrix}
        \alpha_2\\
        \alpha_3\\
        0\\
        \beta_2\\
        \beta_3\\
        0\\
       \end{bmatrix},\qquad \qquad \bs'_3=\begin{bmatrix}
        \alpha_4\\
        \alpha_5\\
        \alpha_6\\
        \beta_4\\
        \beta_5\\
        \beta_6\\
       \end{bmatrix},
 \end{align}
 and $\alpha_i,\beta_i\in\bbR$  for  $i=1,\cdots,6$.
 
 \subsection{Even invariants  of  $E(3)$}
  
  We  may now make use of the normal form to establish finite generation theorems for $E(3)$ invariants. First note that the quadratic invariants $I_{ij}$, $\tilde{I}_{ij}$, $1\leq i\leq j\leq 3$ are even since for {\em any} orthogonal $R\in O(3)$ and $\bx,\by\in\bbR^3$, we have $R\bx\cdot R\by=\bx.\by$. 

 \begin{thm}
 \label{eventhm}
Every even $3$-fold polynomial invariant $f$ of the adjoint action of $E(3)$  can be expressed as a polynomial function of the 12~quadratic vector invariants $I_{ij}, \tilde{I}_{ij}$, $1\leq i\leq j\leq3$. 
 \end{thm}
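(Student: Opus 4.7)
The plan is to extend Weyl's normal-form method for $SO(n)$~\cite{weyl} to the semi-direct product $E(3)=O(3)\ltimes\bbR^3$, exploiting the normal form for three twists constructed in Section~5.2, whose twelve parameters $\alpha_1,\dots,\alpha_6,\beta_1,\dots,\beta_6$ match the twelve quadratic invariants in number.

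Given $f\in A_{_E}^{^{(3)}}$, I would first evaluate $f$ on the normal form $(\bs'_1,\bs'_2,\bs'_3)$ to obtain a polynomial $\tilde{f}(\alpha_i,\beta_i)$. By $E(3)$-invariance, $\tilde f$ determines $f$ on each orbit. Applying the reflection $\diag(1,1,-1)\in O^-(3)$, which fixes $\bs'_1,\bs'_2$ and sends $(\alpha_6,\beta_6)\mapsto(-\alpha_6,-\beta_6)$, allows me to further assume $\alpha_6\geq 0$; hence $\tilde{f}$ is invariant under this involution and therefore lies in the subring $\bbR[\alpha_1,\dots,\alpha_5,\beta_1,\dots,\beta_5,\alpha_6^2,\alpha_6\beta_6,\beta_6^2]$.

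Next, I would compute the twelve invariants restricted to the normal form. The principal minors of the Gram matrix $(I_{ij})$ yield
\[
I_{11}=\alpha_1^2,\qquad I_{11}I_{22}-I_{12}^2=\alpha_1^2\alpha_3^2,\qquad \det(I_{ij})=\alpha_1^2\alpha_3^2\alpha_6^2,
\]
while the remaining $I_{ij}$ together with the $\tilde{I}_{ij}$ form a lower-triangular bilinear system with coefficients in the $\alpha_j$. Solving successively, each $\alpha_j$ and $\beta_k$ becomes an algebraic function of the twelve invariants; in particular, $\alpha_6^2$ and $\alpha_6\beta_6$ are rationally expressible in $\bbR[I_{ij},\tilde{I}_{ij}]$ with denominators lying in $\bbR[I_{ij}]$.

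The main obstacle is upgrading this algebraic description of the parameters to the polynomial identity $\tilde{f}=G(I_{ij},\tilde{I}_{ij})$. My plan is an induction on the degree of $\tilde f$ in $(\alpha_6,\beta_6)$: at each step, strip off the leading $\alpha_6^{2a}(\alpha_6\beta_6)^b$-contribution using the explicit invariant expressions, clear the resulting denominators by multiplying through by a sufficient power of $(I_{11}I_{22}-I_{12}^2)$, and apply the inductive hypothesis to the remainder. The base case, in which $\tilde f$ is purely $\alpha$-dependent, reduces to Weyl's classical theorem for three vectors in $\bbR^3$ under $SO(3)$. The trickiest step will be to rule out terms in $\tilde f$ of the form $\beta_6^{2c}\cdot(\text{other variables})$ unaccompanied by any compensating $\alpha_6$-factor: such terms are consistent with sign-flip invariance but incompatible with the extension of $f$ to a polynomial $E(3)$-invariant on the full twist space, because of the translation symmetry acting at the degenerate stratum $\alpha_6=0$. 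Making this exclusion rigorous is the principal technical difficulty and is precisely where Weyl's $SO(n)$-argument needs the most substantive modification to accommodate the semi-direct product structure.
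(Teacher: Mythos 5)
Your opening moves match the paper's: put the three twists into the normal form of Section~5.2 and evaluate $f$ and the twelve quadratic invariants there. But you then exploit only the single reflection $\diag(1,1,-1)$, which constrains only the pair $(\alpha_6,\beta_6)$. The paper's argument depends essentially on \emph{all three} coordinate reflections (together with their induced action on the $\beta$'s), which force every monomial of the restricted polynomial to have even total degree in each of the three groups $\{\alpha_1,\alpha_2,\alpha_4,\beta_1,\beta_2,\beta_4\}$, $\{\alpha_3,\alpha_5,\beta_3,\beta_5\}$ and $\{\alpha_6,\beta_6\}$. Without the other two reflections you cannot exclude monomials such as $\alpha_1$ or $\alpha_3\beta_5\beta_6^2\alpha_1$ being odd in one of the first two groups, and this matters because the individual parameters are only \emph{algebraic} in the invariants ($\alpha_1=\pm\sqrt{I_{11}}$), not rational. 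The whole point of the parity constraints is that they let each monomial be factored into the quadratic monomials $\alpha_i\alpha_j$, $\alpha_i\beta_j$, $\beta_i\beta_j$ taken within each group, and it is these \emph{products}, not the parameters themselves, that admit rational expressions in $I_{ij},\tilde{I}_{ij}$. Your phrase ``each $\alpha_j$ and $\beta_k$ becomes an algebraic function of the twelve invariants'' glosses over exactly this: an algebraic dependence with unresolved sign ambiguities does not yield a rational, let alone polynomial, expression for $f$.

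Two further points. First, the difficulty you single out --- terms in $\beta_6^{2c}$ with no compensating $\alpha_6$ --- is not where the real work lies: $\beta_6^2$ is simply one of the quadratic monomials, and it is rationally expressible; for instance $\tilde{I}_{33}=2\alpha_4\beta_4+2\alpha_5\beta_5+2\alpha_6\beta_6$ gives $\beta_6^2=\bigl(\tilde{I}_{33}/2-\alpha_4\beta_4-\alpha_5\beta_5\bigr)^2/\alpha_6^2$, with $\alpha_6^2=I_{123}^2/(I_{11}I_{22}-I_{12}^2)$ and $I_{123}^2=\det(I_{ij})$ by the syzygy. No appeal to a degenerate stratum or to translation symmetry is needed. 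Second, the genuinely delicate final step, which your proposal omits entirely, is upgrading the resulting rational expression $p(I_{11},\dots,\tilde{I}_{23})/\bigl(I_{11}^{r}(I_{11}I_{22}-I_{12}^2)^{s}\bigr)$ to a polynomial one. The paper achieves this by re-running the normal-form construction with the three twists permuted, obtaining a second expression with a different denominator, and then using the algebraic independence of the twelve quadratic invariants to conclude that the original denominator divides the numerator. Your induction on the $(\alpha_6,\beta_6)$-degree never confronts this, and your proposed base case does not literally reduce to Weyl's theorem, since what remains is a polynomial on the normal-form slice rather than an $SO(3)$-invariant of three vectors.
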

 
 \begin{proof}
Given a set of three twists  $\bs_i=(\bgw_i,\bgv_i)$, $i=1,2,3$ let $\bs'_i=(\bgw'_i,\bgv'_i)$, $i=1,2,3$ denote their normal forms as in (\ref{basis1}). Evaluating the 6~primal quadratic invariants on $\bs'_i$, we have:
 \begin{align}
  &I_{11}=\alpha_1^2\quad
  &&I_{12}=\alpha_1\alpha_2\notag\\
  &I_{22}=\alpha_2^2+\alpha_3^2\quad
  &&I_{13}=\alpha_1\alpha_4\notag\\
  &I_{33}=\alpha_4^2+\alpha_5^2+\alpha_6^2
  &&I_{23}=\alpha_2\alpha_4+\alpha_3\alpha_5.\label{inveq1}
\intertext{while the dual-part quadratic invariants are:}
 & \tilde{I}_{11}=2\alpha_1\beta_1
 &&\tilde{I}_{12}=\alpha_1\beta_2+\beta_1\alpha_2\notag\\ 
 &\tilde{I}_{22}=2\alpha_2\beta_2+2\alpha_3\beta_3  
 && \tilde{I}_{13}=\alpha_1\beta_4+\beta_1\alpha_4\notag\\ 
 &\tilde{I}_{33}=2\alpha_4\beta_4+2\alpha_5\beta_5+2\alpha_6\beta_6
 && \tilde{I}_{23}=\alpha_2\beta_4+\alpha_3\beta_5+\alpha_4\beta_2+\alpha_5\beta_3.\label{inveq2}
 \end{align}
 
However, even invariants are also fixed by  improper rotations, so we can additionally make any of the following three (simultaneous) transformations by changing the sign of one coordinate:
 \begin{align}
 \label{transs}
&\alpha_6 \longrightarrow -\alpha_6\quad \text{and}\quad \beta_6 \longrightarrow -\beta_6\notag\\
&\alpha_3,\alpha_5\longrightarrow-\alpha_3,-\alpha_5\quad \text{and}\quad\beta_3,\beta_5\longrightarrow-\beta_3,-\beta_5
\notag\\
&\alpha_1,\alpha_2,\alpha_4\longrightarrow-\alpha_1,-\alpha_2,-\alpha_4\quad \text{and}\quad\beta_1,\beta_2,\beta_4\longrightarrow-\beta_1,-\beta_2,-\beta_4
 \end{align}
 The polynomial $f$ is a linear combination of monomials
 \begin{equation}
 \label{monomial}
M=\alpha_1^{a_1}\alpha_2^{a_2}\alpha_3^{a_3}\alpha_4^{a_4}\alpha_5^{a_5}\alpha_6^{a_6}\beta_1^{b_1}\beta_2^{b_2}\beta_3^{b_3}\beta_4^{b_4}\beta_5^{b_5}\beta_6^{b_6}.
 \end{equation}
Invariance under the three transformations (\ref{transs})  gives:
\begin{itemize}
\item
the  exponents $a_6$ and $b_6$ are of equal parity, that is, either both even, or both odd;
\item
the sum of the exponents $a_3$, $a_5$, $b_3$, and $b_5$ must be even;
\item
the sum of the exponents $a_1$, $a_2$, $a_4$, $b_1$, $b_2$, and $b_4$ must be even.
\end{itemize}
It follows that $M$ can be written as a product of powers of the following quadratic monomials:
\begin{align}
\label{qmons}
&\alpha_6^2, \beta_6^2, \alpha_6\beta_6\notag\\
&\alpha_3^2, \alpha_5^2, \alpha_3\alpha_5, \beta_3^2, \beta_5^2, \beta_3\beta_5, \alpha_5\beta_5, \alpha_3\beta_3, \alpha_3\beta_5, \alpha_5\beta_3\notag\\
&\alpha_1^2, \alpha_2^2, \alpha_4^2, \beta_1^2, \beta_2^2, \beta_4^2, \alpha_1\alpha_2, \alpha_1\alpha_4, \alpha_2\alpha_4, \beta_1\beta_2,  \beta_1\beta_4,  \beta_2\beta_4,\notag\\
&\kern2cm\alpha_1\beta_1, \alpha_1\beta_2, \alpha_1\beta_4, \alpha_2\beta_1, \alpha_2\beta_2, \alpha_2\beta_4, \alpha_4\beta_1, \alpha_4\beta_2, \alpha_4\beta_4.
\end{align}
Equations (\ref{inveq1}),  (\ref{inveq2}) can be solved simultaneously to provide rational expressions for each of these basic monomials  in terms of the invariants.  For example, we have immediately $ \alpha_1^2=I_{11}$ and $\ga_1\ga_2=I_{12}$, from which we deduce $\ga_1^2\ga_2^2=I_{12}^2$, so that $\ga_2^2=I_{12}^2/I_{11}$. Likewise, we have $\ga_1\gb_1=\frac12\tilde{I}_{11}$ and hence:
\begin{equation}
\label{invsolve}
\ga_2\gb_1=\frac{(\ga_i\ga_2)(\ga_1\gb_2)}{\ga_1^2}=\frac{I_{12}\tilde{I}_{11}}{2I_{11}}.
\end{equation}
The expressions for $\ga_6^2$  becomes: 
\begin{align}
\label{uu1}
\alpha_6^2&=\frac{I_{11}I_{22}I_{33}-I_{11}I_{23}^2+2I_{12}I_{13}I_{23}-I_{12}^2I_{33}-I_{13}^2I_{22}}{I_{11}I_{22}-I_{12}^2}=\frac{I_{123}^2}{I_{11}I_{22}-I_{12}^2}
\end{align}
from which we can obtain by applying a differentiation rule:
\begin{equation}
2\ga_6\gb_6=\frac{2I_{123}\tilde{I}_{123}(I_{11}I_{22}-I_{12}^2)-I_{123}^2(\tilde{I}_{11}I_{22}+I_{11}\tilde{I}_{22}-2I_{12}\tilde{I}_{12})}{(I_{11}I_{22}-I_{12}^2)^2}.
 \end{equation}
The monomial $\ga_6\beta_6$ can then be rewritten in rational form in terms of the quadratic invariants by reversing these of the syzygies (\ref{syz1},\ref{syz2}).  
 
 In summary, $f$ can be written in the form:
 \begin{equation}
 \label{ratinv}
 \frac{p(I_{11},\cdots,\tilde{I}_{23})}{I_{11}^{r}(I_{11}I_{22}-I_{12}^2)^{s}},
 \end{equation} 
for some polynomial $p$ and non-negative integers $r,s$. Observe, however, that we could have permuted the order in which the normal form  of the original three twists was derived and thereby obtained an alternative form for $f$, in which the subscripts would be permuted likewise. From this it follows that, for example:
 \[
 p(I_{11},\cdots,\tilde{I}_{23})I_{22}^{r'}(I_{22}I_{33}-I_{23}^2)^{s'}-q(I_{11},\cdots,\tilde{I}_{23})I_{11}^{r}(I_{11}I_{22}-I_{12}^2)^{s}\equiv0,
 \]
 for a polynomial $q$ and non-negative integers $r'$, $s'$. Since the 12~quadratic invariants are algebraically independent, the polynomial 
 \[
 p(\xi_1,\cdots,\xi_6,\eta_1,\cdots,\eta_6)\xi_2^{r'}(\xi_2\xi_3-\xi_6^2)^{s'}-q(\xi_1,\cdots,\xi_6,\eta_1,\cdots,\eta_6)\xi_1^{r}(\xi_1\xi_2-\xi_4^2)^{s}\equiv0
 \]
 so that $\xi_1^{r}$, $(\xi_1\xi_2-\xi_4^2)^{s}$ divide $p$ and we may reduce the expression (\ref{ratinv}) to a polynomial in $I_{11},\cdots, \tilde{I}_{23}$. 
\end{proof}

 \subsection{Odd invariants  of  $E(3)$}
The cubic invariants are themselves odd since for any $R\in O(3)$ and $3\x3$ matrix $W$, $\det(RW)=\det R.\det W$.  Evaluating the cubic invariants on the standard form for three twists gives:
 \begin{align}
 I_{123}&=\left|\begin{matrix}\alpha_1&\alpha_2&\alpha_4\\0&\alpha_3&\alpha_5\\0&0&\alpha_6 \end{matrix}\right|=\alpha_1\alpha_3\alpha_6\label{I13}\\
  \tilde{I}_{123}&=\left|\begin{matrix}\beta_1&\alpha_2&\alpha_4\\0&\alpha_3&\alpha_5\\0&0&\alpha_6 \end{matrix}\right|+\left|\begin{matrix}\alpha_1&\beta_2&\alpha_4\\0&\beta_3&\alpha_5\\0&0&\alpha_6 \end{matrix}\right|+\left|\begin{matrix}\alpha_1&\alpha_2&\beta_4\\0&\alpha_3&\beta_5\\0&0&\beta_6 \end{matrix}\right|=\beta_1\alpha_3\alpha_6+\alpha_1\beta_3\alpha_6+\alpha_1\alpha_3\beta_6.\label{I14}
 \end{align}

 \begin{thm}\label{oddthm}
  The odd  invariants of 3-fold  adjoint action of $E(3)$ can be expressed as rational functions of the 12~quadratic vector invariants (\ref{e:3inv}) and  $I_{123}$.
 \end{thm}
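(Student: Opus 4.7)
The plan is to reduce the odd case to the even case by multiplying by the simplest non-trivial odd invariant, namely $I_{123}$ itself. First I would verify that $I_{123}$ is an odd $3$--fold invariant of the adjoint action of $E(3)$. Since $I_{123}=[\bgw_1\;\bgw_2\;\bgw_3]$ depends only on the primal parts, the translation $T$ plays no role, and for any $R\in O(3)$ one has $I_{123}(R\bgw_1,R\bgw_2,R\bgw_3)=\det(R)\,I_{123}(\bgw_1,\bgw_2,\bgw_3)$. Hence $I_{123}\in A_{_O}^{^{(3)}}$.

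Next, for any $f\in A_{_O}^{^{(3)}}$, the product $fI_{123}$ is an even invariant: under $(R,T)\in E(3)$ both factors acquire a factor $\det R=\pm 1$, and the two signs cancel. Explicitly,
\[
(fI_{123})(R\bgw_1,\ldots,TR\bgw_3+R\bgv_3)=\det(R)^2\,f(\bgw_1,\ldots,\bgv_3)\,I_{123}(\bgw_1,\bgw_2,\bgw_3)=fI_{123},
\]
so $fI_{123}\in A_{_E}^{^{(3)}}$.

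Applying Theorem \ref{eventhm}, I can express $fI_{123}$ as a polynomial $P(I_{ij},\tilde I_{ij})$ in the twelve quadratic invariants $I_{ij},\tilde I_{ij}$, $1\le i\le j\le 3$. Dividing then yields
\[
f=\frac{P(I_{ij},\tilde I_{ij})}{I_{123}},
\]
which is the claimed rational expression in the twelve quadratic invariants together with $I_{123}$. The identity $fI_{123}=P(I_{ij},\tilde I_{ij})$ is a polynomial identity in $\bgw_i,\bgv_i$, so it holds on all of the twist space and passes to a valid equality of rational functions wherever $I_{123}\ne 0$.

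I do not anticipate any real obstacle: the substantive work has already been done in Theorem \ref{eventhm}, and the only thing requiring care is the elementary but essential observation that $I_{123}$ is not the zero polynomial, so that division by it is meaningful.
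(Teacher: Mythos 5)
Your proof is correct, but it takes a genuinely different route from the paper's. The paper continues the normal-form monomial analysis begun in Theorem~\ref{eventhm}: it shows that each monomial of an odd invariant, evaluated on the normal form (\ref{basis1}), has odd exponent sum in each of the three sets $\Theta_1,\Theta_2,\Theta_3$, factors it as $\th_1\th_2\th_3 M'$ with $M'$ even, and rewrites the cubic factor using $I_{123}=\ga_1\ga_3\ga_6$ together with quadratic monomials, so that $I_{123}$ ends up linearly in the numerator and the denominators are polynomials in the quadratic invariants alone. You instead use the classical Weyl device: observe that $I_{123}$ is itself odd, that the product $fI_{123}$ of two odd invariants is even, apply Theorem~\ref{eventhm} to write $fI_{123}=P(I_{ij},\tilde I_{ij})$, and divide. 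This is shorter, uses the even theorem purely as a black box, and is valid as an identity of rational functions since $I_{123}$ is not the zero polynomial. What the paper's version buys is finer structural information (a representation with $I_{123}$ appearing only to the first power in the numerator), though the two forms are interconvertible via the syzygy $I_{123}^2=\det(I_{ij})$ of (\ref{syz1}), which lets one replace $1/I_{123}$ by $I_{123}/\det(I_{ij})$; so your expression can be brought to the paper's form if desired. Both arguments establish the stated theorem.
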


 \begin{proof}
Any odd invariant $f$ of the action of $SE(3)$ on a set of three twists must change sign under improper rotations and thus under the transformations itemised in (\ref{transs}). It follows that for every monomial $M$ as in (\ref{monomial}), in the expression for $f$ evaluated on a normal form (\ref{basis1}) the sum of the exponents of the variables in each of the following sets $\Theta_i$, $i=1,2,3$, must be  odd:
 \begin{align*}
  \Theta_1&=\{\alpha_1,\alpha_2,\alpha_4,\beta_1,\beta_2,\beta_4\}\\
  \Theta_2&=\{\alpha_3,\alpha_5, \beta_3,\beta_5\}\\
  \Theta_3&=\{\alpha_6, \beta_6\}.
 \end{align*}
For example, considering only the variables in $\Theta_1$, we have: the two possibilities:
  \begin{align}
   \alpha_6^{2n+1}\beta_6^{2m}&=(\alpha_6^{2})^{n}(\beta_6^{2})^{m}\alpha_6\notag\\
   \alpha_6^{2n}\beta_6^{2m+1}&=(\alpha_6^{2})^{n}(\beta_6^{2})^{m}\beta_6,
  \end{align}
and similarly for the terms in $\Theta_2,\Theta_3$.  Therefore $M=\th_1\th_2\th_3\,M'$, where $M'$ is  a monomial of even degree that can be written in terms of the monomials listed in (\ref{qmons}) and $\th_i\in \Theta_i$, $i=1,2,3$.  Every such cubic monomial can be rewritten as follows:
\begin{equation}
\th_1\th_2\th_3=\frac{(\ga_1\th_1)(\ga_3\th_2)(\ga_6\th_3)\ga_1\ga_3\ga_6}{\ga_1^2\ga_3^2\ga_6^2}=\frac{p(I_{11},\cdots,\tilde{I}_{23})}{q(I_{11},\cdots,I_{23})}I_{123}
\end{equation}
where $p$ is the polynomial determined by the expressions for quadric invariants obtained in Theorem~\ref{eventhm}, $q$ is the right-hand side in (\ref{syz1}) and we use equation (\ref{I13}).

It follows immediately that every odd 3-fold invariant of the adjoint action of $SE(3)$ is rationally generated by the 12 quadric invariants together with $I_{123}$.  Note that a similar trick would enable us to replace $I_{123}$ by $\tilde{I}_{123}$.
 \end{proof}
 
 Combining the results for even and odd invariants gives the following.

 \begin{cor}\label{ratgen}
Every invariant of the 3-fold  adjoint action of $SE(3)$ can be expressed as  a rational function of the 12~quadratic  invariants (\ref{e:3inv}) and  $I_{123}$.
 \end{cor}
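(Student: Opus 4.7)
The plan is to deduce the corollary directly by combining the three main results already established in this section, namely Theorem~\ref{oo}, Theorem~\ref{eventhm}, and Theorem~\ref{oddthm}. The strategy rests on the observation that $SE(3)$-invariance is a weaker condition than $E(3)$-invariance, so the space $B^{(3)}$ of $SE(3)$-invariants in principle contains more functions than $A_E^{(3)}$ or $A_O^{(3)}$ individually; the content of Theorem~\ref{oo} is that the extra freedom is precisely accounted for by the direct sum decomposition into even and odd parts.

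First I would take an arbitrary $f \in B^{(3)}$, i.e.\ a polynomial $3$-fold invariant of the adjoint action of $SE(3)$. By Theorem~\ref{oo}, write $f = f_E + f_O$ with $f_E \in A_E^{(3)}$ and $f_O \in A_O^{(3)}$. Then I would invoke Theorem~\ref{eventhm} to express $f_E$ as a polynomial (in particular, a rational function) in the twelve quadratic invariants $I_{ij}, \tilde{I}_{ij}$ for $1 \le i \le j \le 3$, and Theorem~\ref{oddthm} to express $f_O$ as a rational function of those same twelve quadratic invariants together with the cubic invariant $I_{123}$. Adding these two expressions yields $f$ as a rational function of the asserted $13$ generators.

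Since the statement is about rational generation rather than polynomial generation, there is essentially no further work: sums of rational functions are rational, so closure is automatic. The only subtle point worth flagging is consistency with the prior remark that $\tilde{I}_{123}$ can be expressed rationally in terms of the other $13$ invariants via syzygy~(\ref{syz2})---this explains why the full list of $14$ dualised invariants collapses to $13$ in the rational generation statement. No serious obstacle arises in this proof: the hard work was done in proving Theorems~\ref{oo},~\ref{eventhm},~\ref{oddthm}, and the corollary is a one-line assembly of those results.
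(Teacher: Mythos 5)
Your proposal is correct and matches the paper's own (implicit) argument exactly: the paper introduces the corollary with the single line ``Combining the results for even and odd invariants gives the following,'' which is precisely your decomposition $f=f_{_E}+f_{_O}$ via Theorem~\ref{oo} followed by Theorem~\ref{eventhm} for the even part and Theorem~\ref{oddthm} for the odd part. No further comment is needed.
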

 
 \section{Conclusion}
 It would clearly be desirable to go beyond the Corollary~\ref{ratgen} to obtain a polynomial finite-generation theorem for multiple twists.  However, both the original argument of Weyl for vector invariants of the special orthogonal group and the method employed by Panyushev for the Euclidean group and two twists fail for $k\geq3$ twists because of the existence of syzygies among the known invariants.  Nevertheless, we conjecture that the 14~invariants listed in (\ref{e:3inv}) do generate the ring of polynomial invariants in this case.

\section*{Acknowledgements}
The authors warmly acknowledge many invaluable conversations with our colleague Dr Petros Hadjicostas.

\end{document}